\documentclass{siamltex}
\usepackage{amstext,amsmath,amssymb}         
\usepackage{mathtext}
\usepackage{multicol}
\usepackage{pgfplots}
\usepackage{makecell}
\usepackage{hyperref}
\usepackage{float}
\usepackage{placeins}
\usepackage{multicol}
\usepackage[noend]{algorithmic}
\usepackage{pgfplots}
\usepackage[colorinlistoftodos,textwidth=4cm,shadow]{todonotes}
\usepackage{algorithmic,algorithm}

\usepackage{amsthm}
\newtheorem{remark}{Remark}

\usepackage{graphicx,subcaption}

\DeclareMathOperator*{\argmin}{arg\,min}

\newcommand{\OP}{\mathrm{P}}
\newcommand{\RQ}{\mathfrak{R}}
\newcommand{\Grad}{\mathrm{grad}}
\newcommand{\Hess}{\mathrm{Hess}}
\newcommand{\M}{\mathcal{M}}
\newcommand{\VEC}{\operatorname{vec}}
\newcommand{\F}{\mathbb{R}}
\newcommand{\SM}{\mathcal{N}}

\newtheorem{prop}{Proposition}

\newcounter{Ivan}

\newcounter{Maxim}

\pdfoutput=1
\title{Jacobi-Davidson method on low-rank matrix manifolds\thanks{This study was supported by the Ministry of Education and Science of the Russian Federation (grant 14.756.31.0001), by RFBR
grants 16-31-60095-mol-a-dk, 16-31-00372-mol-a and by Skoltech NGP program.}}

\author{M.~V. Rakhuba\footnotemark[2] 
\and I.~V. Oseledets\footnotemark[2] \footnotemark[3]}
\begin{document}
\maketitle

\renewcommand{\thefootnote}{\fnsymbol{footnote}}
\footnotetext[2]{Skolkovo Institute of Science and Technology,
Skolkovo Innovation Center, Building 3,
143026 Moscow,
Russia (\href{mailto:rakhuba.m@gmail.com}{rakhuba.m@gmail.com})}
\footnotetext[3]{Institute of Numerical Mathematics,
Gubkina St. 8, 119333 Moscow, Russia (\href{mailto:i.oseledets@skoltech.ru}{i.oseledets@skoltech.ru})}


\begin{abstract}
In this work we generalize the Jacobi-Davidson method to the case when eigenvector can be reshaped into a low-rank matrix.
In this setting the proposed method inherits advantages of the original Jacobi-Davidson method, has lower complexity and requires less storage.
We also introduce low-rank version of the Rayleigh quotient iteration which naturally arises in the Jacobi-Davidson method.
\end{abstract}



\section{Introduction}

This paper considers generalization of the Jacobi-Davidson (JD) method \cite{svh-jd-2000} for finding target eigenvalue $\lambda$ (extreme or closest to a given number) and the corresponding eigenvector $x$ of $N\times N$ matrix~$A$:
$$
Ax = \lambda x.
$$
We treat the specific case when $N=nm$ and the eigenvector $x$ reshaped into \mbox{$n\times m$} matrix is exactly or approximately of small rank $r$.
For example, consider a Laplacian operator discretized on tensor product grid; its reshaped eigenvectors are of rank $1$.
For $r\ll n,m$ our assumption allows to significantly reduce storage of the final solution, at the same time leading to algorithmic complications that we address in this paper.



%

Similarly to the original JD method, we derive the low-rank Jacobi correction equation and propose low-rank version of subspace acceleration.
The proposed approach takes the advantage of the original JD method. Compared with the Rayleigh quotient iteration and the Davidson approach, the method is efficient for the cases when arising linear systems are solved both accurately and inexactly.
 
The JD method is known to be a Riemannian Newton method on a unit sphere $\{x: \| x \| = 1\}$ with additional subspace acceleration \cite{absil-manopt-2009}.
We utilize this interpretation and derive a new method as an inexact Riemannian Newton method on the intersection of  the sphere and the fixed-rank manifold.
In derivation we assume that the matrix~$A$ is real and symmetric, however we test our approach on non-symmetric matrices as well.
Complexity of the proposed algorithm scales as $\mathcal{O}\left((n+m) r (R+r)\right)$ if $A$ can be approximated as
$$
	A \approx  \sum_{\alpha=1}^R F_\alpha \otimes G_\alpha,
$$
where $F_\alpha$ and $G_\alpha$ allow  fast matrix-vector multiplication, e.g. they are sparse.


Our main contributions are:
\begin{itemize}
\item We generalize the Jacobi correction equation (Sec. \ref{jd:sec:jaccor}) and the subspace acceleration (Sec. \ref{jd:sec:sub_acc}) to the case of fixed-rank matrix manifolds.
\item We introduce low-rank version of the Rayleigh quotient iteration (Sec. \ref{jd:sec:ii}) which naturally arises in the JD method.
\end{itemize}








\section{Rayleigh quotient minimization on sphere}
The first ingredient of the JD method is the Jacobi correction equation. 
The Jacobi correction equation can be derived as a Riemannian Newton method on the unit sphere \cite{absil-manopt-2009}, which will be useful for our purposes.
In this section we provide the derivation, and in Sec.~\ref{jd:sec:jaccor} it will be generalized to the low-rank case.

Given a symmetric matrix $A\in\F^{n\times n}$ the goal is to optimize
\begin{equation}\label{jd:eq:rq}
	\RQ (x) = x^\top A x,
\end{equation}
subject to $x\in S^{n-1}$, where $S^{n-1}$ is a unit sphere considered as an embedded submanifold of $\mathbb{R}^n$ with the pullback metric $g_x(\xi, \eta) = \xi^\top \eta$. 
The Riemannian optimization approach implies that we optimize $\RQ (x)$ on $S^{n-1}$, i.e. constraints are already accounted for in the search space.
One of the key concepts in the Riemannian optimization is a \emph{tangent space} which is in fact a linearization of the manifold at a given point.
The orthogonal projection of $\xi$ on the tangent space $T_{x} S^{n-1}$ of $S^{n-1}$ at $x$ can be written as \cite{absil-manopt-2009}
\begin{equation}\label{jd:eq:proj}
	\OP_{T_x S^{n-1}} \xi = (I - x x^\top) \xi.
\end{equation}
The Riemannian gradient of \eqref{jd:eq:rq} is
\begin{equation}\label{jd:eq:grad}
	\grad \RQ (x) = \OP_{T_x S^{n-1}} \nabla \RQ (x) = (I - x x^\top) (2Ax),
\end{equation}
where $\nabla$ denotes the Euclidean gradient.
The Hessian $\Hess_x: T_{x} S^{n-1} \to T_{x} S^{n-1}$ can be obtained as \cite{amt-riemhess-2013}
\begin{equation}\label{jd:eq:hess_sphere}
\begin{split}
	\Hess_x\, \RQ (x) [\xi] =& \OP_{T_x S^{n-1}} \left( \mathrm{D}\, \left( \Grad\, \RQ(x)\right) [\xi] \right) = \\
		& 2\OP_{T_x S^{n-1}} \left( \mathrm{D}\left( \OP_{T_x S^{n-1}} Ax\right) [\xi] \right) = \\
		 &2 \OP_{T_x S^{n-1}} (A \xi + \dot{\OP}_{T_x S^{n-1}} A x), \quad \xi \in T_{x} S^{n-1},
\end{split}
\end{equation}
where $\mathrm{D}$ denotes the differential map (directional derivative) and
$$
	\dot{\OP}_{T_x S^{n-1}} A x \equiv \mathrm{D}(\OP_{T_x S^{n-1}})[\xi] A x  = - (x^\top Ax) \xi - (\xi^\top A x) x.
$$
Since $\OP_{T_x S^{n-1}} x = 0$ and $\OP_{T_x S^{n-1}} \xi = \xi$ we arrive at
\begin{equation}\label{jd:eq:hess}
	\Hess_x\, \RQ (x) [\xi] = 2 \OP_{T_x S^{n-1}} \left(A - (x^\top A x) I \right) \OP_{T_x S^{n-1}} \xi.
\end{equation}
The $k$-th step of the Riemannian Newton methods looks as
\begin{equation}\label{jd:eq:newt}
	\Hess_{x_k}\, \RQ (x_k) [\xi_k] =  - \grad \RQ (x_k), \quad \xi_k \in T_{x_k} S^{n-1},
\end{equation}
with the \emph{retraction}
\begin{equation}\label{jd:eq:retr_sphere}
	x_{k+1} = \frac{x_k + \xi_k}{\|x_k + \xi_k\|},
\end{equation}
which returns $x_k + \xi_k$ back to the manifold $S^{n-1}$.
Using \eqref{jd:eq:proj}, \eqref{jd:eq:grad} and \eqref{jd:eq:hess} we can rewrite \eqref{jd:eq:newt} as
\begin{equation}\label{jd:eq:jac_cor}
	(I - x_k x_k^\top) \left(A - \RQ(x_k) I \right)(I - x_k x_k^\top) \xi_k = - r_k, \quad x_k^\top \xi_k = 0,
\end{equation}
where
$$
\RQ(x_k) = x_k^\top A x_k, \quad  r_k = (I - x_k x_k^\top) Ax_k = A x_k - \RQ(x_k) x_k.
$$
Equation \eqref{jd:eq:jac_cor} is called the \emph{Jacobi correction equation} \cite{svh-jd-2000}.
Note that without the projection $(I - x_k x_k^\top)$ we obtain the Davidson equation
$$
\left(A - \RQ(x_k) I \right) \xi_k = - r_k,
$$
which has solution $\xi_k = -x_k$ collinear to the current approximation $x_k$.
This is the reason for the Davidson equation to be solved inexactly.
The original Davidson algorithm~\cite{d-davidson-1975} replaces $A$ by its diagonal part $\mathrm{diag}(A)$.
By contrast, even if the Jacobi correction equation \eqref{jd:eq:jac_cor} is solved inexactly using Krylov iterative methods, its solution $\xi_k$ will be automatically orthogonal to $x_k$ which is beneficial for the computational stability.
Moreover, since the JD method has the Newton interpretation it boasts local superlinear convergence.

The goal of this paper is to extend the Jacobi correction equation \eqref{jd:eq:jac_cor} and the second ingredient of the JD method --- \emph{subspace acceleration} --- to the case of low-rank manifolds.

\section{Jacobi correction equation on fixed-rank manifolds} \label{jd:sec:jaccor}

Let $x\in\F^{nm}$ be an eigenvector of $A$ and $X\in\F^{n\times m}$ be its matricization: $x = \VEC (X)$, where $\VEC$ denotes columnwise reshape of $n\times m$ matrix into $nm$ vector.
In this paper we make an assumption that matricisized eigenvector $X$ is approximately of rank $r$.
Therefore, for example, to approximate the smallest eigenvalue we solve the following optimization problem
\begin{equation}\label{jd:eq:optimization}
\begin{split}
	\text{minimize}\quad &\RQ(x) = x^\top A x \\
	\text{s.t. }\quad   &x \in S^{nm-1}\cap \M_r, 
\end{split}
\end{equation}
where
$$
	\M_r = \{ \mathrm{vec}(X),\, X\in \mathbb{R}^{n\times m}: \text{rank}(X) = r \},
$$
which forms a smooth embedded submanifold of $\F^{nm}$ of dimension $(m+n)r - r^2$ \cite{lee-introman-2001}.
By analogy with the derivation of the Jacobi equation we additionally intersected the manifold  $\M_r$ with the sphere~$S^{nm-1}$.
As we will see from the following proposition $S^{nm-1}\cap \M_r$ forms a smooth embedded submanifold of $\F^{nm}$.
Hence, optimization problem \eqref{jd:eq:optimization} can be solved using Riemannian optimization techniques.  

\begin{prop}
Let $\SM =  S^{nm-1}\cap \M_r$, then
\begin{enumerate}
	\item $\SM$ forms smooth embedded submanifold of $\F^{nm}$ of dimension $(n+m)r-r^2 - 1$
	\item The tangent space of $\SM$ at $\VEC (X) \in \SM$ with $X$ given by SVD: $X= USV^\top$, $U^\top U = I$, $V^\top V = I$, $S = \mathrm{diag}(\sigma_1,\dots,\sigma_r)$, $\sigma_1\geq \dots \geq \sigma_r >0$ can be parametrized as 
		\[
		\begin{split}
		T_X \SM = \{\VEC (U_\xi V^\top + U V^\top_\xi + U S_\xi V^\top): U_\xi \perp U,\,  V_\xi \perp V,\, \VEC (S_\xi) \perp \VEC (S) \}.
		\end{split}
		\] 
	\item The orthogonal projection $\OP_{T_X \SM}$ onto $T_X \SM$ can be written as
	\begin{equation}\label{jd:eq:oproj}
	\begin{aligned}
	\OP_{T_X \SM} &=  \OP_{T_X \M_r}  \OP_{T_X S^{nm-1}} = \OP_{T_X S^{nm-1}} \OP_{T_X \M_r}\\
	&=\OP_{T_X \M_r}  - \mathrm{vec}(X)\mathrm{vec}^\top(X),
	\end{aligned}
	\end{equation} 
	where $\OP_{T_X \M_r}$ is the orthogonal projection onto the tangent space of $\M_r$:
	$$
	\OP_{T_X \M_r} = VV^\top \otimes UU^\top + 
	VV^\top \otimes (I_n - UU^\top) +  (I_m - VV^\top) \otimes UU^\top.
	$$
\end{enumerate}
\end{prop}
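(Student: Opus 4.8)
The plan is to prove the three claims in order, since each builds on the previous one. First I would establish that $\SM = S^{nm-1} \cap \M_r$ is a smooth embedded submanifold. The natural approach is to show that the sphere and $\M_r$ intersect transversally, so that their intersection is automatically a smooth embedded submanifold whose dimension is $\dim \M_r + \dim S^{nm-1} - nm = \bigl((n+m)r - r^2\bigr) + (nm-1) - nm = (n+m)r - r^2 - 1$, matching the claim. Transversality requires checking that at each $x \in \SM$ the tangent spaces $T_x \M_r$ and $T_x S^{nm-1}$ together span $\F^{nm}$, i.e. $T_x \M_r + T_x S^{nm-1} = \F^{nm}$. Since $T_x S^{nm-1} = x^{\perp}$ has codimension $1$, it suffices to exhibit a single vector in $T_x \M_r$ that is not orthogonal to $x$; the vector $x = \VEC(X)$ itself lies in $T_x \M_r$ (scaling $X$ stays on the rank-$r$ manifold), and $\langle x, x \rangle = 1 \neq 0$, so transversality holds.

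Next I would identify the tangent space. Because the intersection is transversal, $T_X \SM = T_X \M_r \cap T_X S^{nm-1}$. I would start from the standard parametrization of $T_X \M_r$ in SVD coordinates, namely tangent vectors of the form $\VEC(U_\xi V^\top + U V_\xi^\top + U S_\xi V^\top)$ with $U_\xi \perp U$ and $V_\xi \perp V$ (here $S_\xi$ is an arbitrary $r\times r$ matrix), and then impose the single extra condition $x^\top \xi = 0$ coming from $T_X S^{nm-1}$. The computation $\langle \VEC(X), \VEC(U_\xi V^\top + U V_\xi^\top + U S_\xi V^\top)\rangle$ reduces, using $U^\top U_\xi = 0$ and $V^\top V_\xi = 0$ together with the orthonormality of $U,V$, to $\langle S, S_\xi \rangle = \VEC(S)^\top \VEC(S_\xi)$. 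Hence the sphere constraint becomes exactly $\VEC(S_\xi) \perp \VEC(S)$, giving the stated parametrization.

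Finally I would verify the projection formula. The cleanest route is to first observe that the two projectors $\OP_{T_X \M_r}$ and $\OP_{T_X S^{nm-1}} = I - \VEC(X)\VEC^\top(X)$ commute. Commutativity follows because $\VEC(X) \in T_X \M_r$ is fixed by $\OP_{T_X \M_r}$, so $\OP_{T_X \M_r}\bigl(\VEC(X)\VEC^\top(X)\bigr) = \VEC(X)\VEC^\top(X) = \bigl(\VEC(X)\VEC^\top(X)\bigr)\OP_{T_X \M_r}$, whence $\OP_{T_X \M_r}\OP_{T_X S^{nm-1}} = \OP_{T_X S^{nm-1}}\OP_{T_X \M_r}$. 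The product of two commuting orthogonal projectors is the orthogonal projector onto the intersection of their ranges; since the ranges are $T_X \M_r$ and $T_X S^{nm-1}$, whose intersection is $T_X \SM$ by part 2, the product equals $\OP_{T_X \SM}$. Expanding the commuting product and using $\OP_{T_X \M_r}\VEC(X) = \VEC(X)$ gives the third expression $\OP_{T_X \M_r} - \VEC(X)\VEC^\top(X)$. I expect the main obstacle to be the honest verification of transversality (and equivalently that $\VEC(X)$ genuinely lies in $T_X \M_r$), since everything else follows formally once that is in hand; the bookkeeping in the tangent-space inner product is routine but must be done carefully to confirm that only the $S_\xi$ component survives.
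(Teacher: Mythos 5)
Your proposal follows essentially the same route as the paper: transversality of $\M_r$ and $S^{nm-1}$ for the dimension count, the extra gauge condition $\VEC(S_\xi)\perp\VEC(S)$ obtained by imposing $\xi^\top x=0$ on the standard parametrization of $T_X\M_r$, and commutativity of the two projectors (via $\OP_{T_X\M_r}\VEC(X)=\VEC(X)$) to identify the product with $\OP_{T_X\SM}$. If anything, you supply slightly more detail than the paper on the transversality check and the inner-product computation; the argument is correct.
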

\begin{proof}

The first property follows from the fact that $\M_r$ and $S^{nm-1}$ are transversal embedded submanifolds of $\F^{nm}$. 
	Indeed, one can easily verify that $$T_X \M_r + T_X S^{nm-1} =\F^{nm}.$$
	Hence, by the transversality property \cite{lee-introman-2001} $\SM$ forms a smooth embedded submanifold of $\F^{nm}$ of dimension 
	$$
	\dim(\M_r) + \dim(S^{nm-1}) - \dim(\F^{nm}) = (n+m-r)r -1
	$$

Let us prove the second property of the proposition. Vector $\xi \in T_X \M_r$ can be parametrized \cite{bart-riemcompl2d-2013} as
\begin{equation}\label{jd:eq:parametrization}
	\xi = \VEC (U_\xi V^\top + U V^\top_\xi + U S_\xi V^\top)
\end{equation}
with the gauge conditions
\begin{equation}\label{jd:eq:gauge_uv}
	U_\xi \perp U, \quad V_\xi \perp V.
\end{equation}
To obtain the parametrization of $\xi \in T_{X} S^{nm-1} \cap T_X \M_r$ we need to take into account that $\xi \in T_{X} S^{nm-1}$ and, hence, $\xi^\top x =0$ yielding the additional gauge condition
\begin{equation}\label{jd:eq:gauge_s}
\text{vec} (S_\xi) \perp \text{vec} (S).
\end{equation}

Let us prove the third property by showing that operators $\OP_{T_X \M_r}$ and $\OP_{T_X S^{nm-1}}$ commute and, hence,
\begin{equation}\label{jd:eq:oproj_comm_old}
\OP_{T_X \SM} =\OP_{T_X \M_r}  \OP_{T_X S^{nm-1}} = \OP_{T_X S^{nm-1}} \OP_{T_X \M_r}
\end{equation} 
is an orthogonal projection on the intersection of $T_X \M_r$ and $T_X S^{nm-1}$. Indeed, since 
$$
\VEC(X)\VEC\left(X\right)^\top = (V\otimes U) \VEC(S) \left(\VEC(S)\right)^\top (V^\top \otimes U^\top),
$$
and $$UU^\top ( I -  UU^\top) = 0, \quad VV^\top ( I -  VV^\top) = 0,$$
we get
$$
	\OP_{T_X \M_r} \VEC(X)\left(\VEC(X)\right)^\top = \VEC(X)\left(\VEC(X)\right)^\top =\VEC(X)\left(\VEC(X)\right)^\top \OP_{T_X \M_r}.
$$
Finally, since $\OP_{T_X S^{nm-1}} = I - \VEC(X) \left(\VEC(X)\right)^\top$
\[
\begin{split}
	\OP_{T_X \SM} \OP_{T_X \SM} =&\ \OP_{T_X \M_r}  \OP_{T_X S^{nm-1}} =\ \OP_{T_X \M_r} ( I - \VEC(X)\left(\VEC(X)\right)^\top)\\ 
=&\ \OP_{T_X \M_r}  - \VEC(X)\left(\VEC(X)\right)^\top = \ \OP_{T_X S^{nm-1}}\OP_{T_X \M_r},
\end{split}		
\]
which completes the proof.
\end{proof}

\subsection{Derivation of Jacobi correction equation on $\SM$}
Let us derive the generalization of the original Jacobi correction equation, which is the Riemannian Newton method on $\SM$.
Using \eqref{jd:eq:oproj} and notation $x = \VEC (X)$ we obtain
\begin{equation}\label{jd:eq:smgrad}
	\grad \RQ (x) =\OP_{T_X \SM}\nabla\RQ (x) = \OP_{T_X \M_r}(I - xx^\top) \nabla \RQ (x) = 2\OP_{T_X \M_r}(I - xx^\top) Ax.
\end{equation}
Similarly to \eqref{jd:eq:hess_sphere} using \eqref{jd:eq:oproj} we get
\[
\begin{split}
	\Hess_X\, \RQ (x) [\xi] =& 2 \OP_{T_X \SM} (A \xi + \dot{\OP}_{T_X \SM} A x) = \\
		& 2 \OP_{T_X \SM}(A \xi - x\, \xi^\top A x - \xi x^\top A x + \dot{\OP}_{T_X \M_r} Ax),
\\ & \xi \in T_{X} \SM.
\end{split}
\]
According to \eqref{jd:eq:oproj} $\OP_{T_X \SM} x = \OP_{T_X \M_r} \OP_{T_X S^{nm-1}} x = 0$, thus
\[
\begin{split}
	\Hess_X\, \RQ (x) [\xi] =\ &2\OP_{T_X \SM} (A - (x^\top A x) I) \xi + \OP_{T_X \SM} \dot{\OP}_{T_X \M_r} Ax = \\
	&2\OP_{T_X \M_r} (I - x x^\top) (A - (x^\top A x) I) \xi + \OP_{T_X \SM} \dot{\OP}_{T_X \M_r} Ax,
\end{split}
\]
where the part $\OP_{T_X \SM} \dot{\OP}_{T_X \M_r} Ax$ corresponds to the curvature of the low-rank manifold.
This term contains inverses of singular values. Singular values can be small if the rank is overestimated. This, therefore, leads to difficulties in numerical implementation.
Similarly to \cite{ksv-manprec-2016} we omit this part and obtain an inexact Newton method, which can be viewed as a constrained Gauss-Newton method.
Omitting $\OP_{T_X \SM} \dot{\OP}_{T_X \M_r} Ax$ we get
$$
	\Hess_X\, \RQ (x) [\xi] \approx 2\OP_{T_X \M_r} (I - x x^\top) (A - \RQ (x) I) \xi,
$$
or in the symmetric form 
\begin{equation}\label{jd:eq:smhess}
	\Hess_X\, \RQ (x) [\xi] \approx 2\OP_{T_X \M_r} (I - x x^\top) (A - \RQ (x) I) (I - x x^\top)\OP_{T_X \M_r}  \xi.
\end{equation}
Using \eqref{jd:eq:smgrad} and \eqref{jd:eq:smhess} we can write the linear system arising in the inexact Newton method as
\begin{equation}\label{jd:eq:newton_our}
\begin{split}
(I - x x^\top) \left[ \OP_{T_X \M_r}(A - \RQ (x) I)\OP_{T_X \M_r}\right] &(I - x x^\top) \xi = - \OP_{T_X \M_r}(I - x x^\top) Ax, \\ \xi^\top x = 0, &\quad \xi \in {T_X \M_r} .
\end{split}
\end{equation}
which has the form similar to the original Jacobi correction equation \eqref{jd:eq:jac_cor} with $(A - \RQ (x) I)$ projected on~$T_X \M_r$.

Equation \eqref{jd:eq:newton_our} is a linear system of size $nm\times nm$, but the number of unknown elements is equal to dimension of the tangent space $(n+m) r - r^2 - 1$.
Hence, the next step is to derive a \emph{local linear system} that is of smaller size and is useful for the numerical implementation.
The following proposition holds.
\begin{prop}
The solution of \eqref{jd:eq:newton_our} written as $$ \xi = \VEC (U_\xi V^\top + U V^\top_\xi + U S_\xi V^\top), $$ 
can be found from the local system
\begin{equation}\label{jd:eq:lrj}
	(I - BB^\top) (A- \RQ (x) I)_\mathrm{loc} (I - BB^\top) \tau_\xi = - (I - BB^\top) g, \quad B^\top\tau_\xi = 0,
\end{equation}
where\footnote{For an $nm\times nm$ matrix $C$ we introduced notation 
\[
\begin{split}
&C_{v,v} = (V_k^\top  \otimes I_n) C( V_k \otimes I_n) \in \F^{nr \times nr},\\
&C_{v,u} = (V_k^\top  \otimes I_n) C  (I_m\otimes U_k) \in \F^{nr \times mr},\\
&C_{v,vu}  = (V_k^\top  \otimes I_n) C ( V_k \otimes U_k) \in \F^{nr \times r^2}.
\end{split}
\]
Matrices $C_{u,v},C_{u,u}, C_{u,vu} $ and $C_{vu,v},C_{vu,u},C_{vu,vu}$ are defined likewise.}
$$
	\tau_\xi = \begin{bmatrix} \VEC (U_\xi) \\ \VEC (V^\top_\xi) \\ \VEC (S_\xi) \end{bmatrix}, \quad
	g = \begin{bmatrix} A_{v,v}\,\VEC (U S) \\ A_{u,u}\,\VEC (S V^\top) \\ A_{vu,vu}\,\VEC (S) \end{bmatrix},
\quad 
B = 
	\begin{bmatrix}
			I_r \otimes U & 0 & 0 \\
	0 & V \otimes I_r & 0 \\
	 0 & 0 & \VEC (S) \\
	\end{bmatrix},
$$
$$
 (A -  \RQ (x) I)_\mathrm{loc} = 
\begin{bmatrix}
	(A - \RQ (x) I)_{v,v}  & (A - \RQ (x) I)_{v,u} & (A - \RQ (x) I)_{v,uv} \\
	(A - \RQ (x) I)_{u,v} & (A - \RQ (x) I)_{u,u} & (A - \RQ (x) I)_{u,vu} \\
	(A - \RQ (x) I)_{vu,v} & (A - \RQ (x) I)_{vu,u} & (A - \RQ (x) I)_{vu,vu}
\end{bmatrix},
$$

\end{prop}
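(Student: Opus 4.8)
The plan is to realize the local system \eqref{jd:eq:lrj} as the full Newton equation \eqref{jd:eq:newton_our} expressed in the coordinates $\tau_\xi$ that parametrize the tangent space. First I would introduce the parametrization map
\[
Q = \begin{bmatrix} V\otimes I_n & I_m\otimes U & V\otimes U\end{bmatrix},
\]
so that the identities $\VEC(U_\xi V^\top)=(V\otimes I_n)\VEC(U_\xi)$, $\VEC(U V_\xi^\top)=(I_m\otimes U)\VEC(V_\xi^\top)$ and $\VEC(U S_\xi V^\top)=(V\otimes U)\VEC(S_\xi)$ give $\xi = Q\tau_\xi$. Using the same $\VEC$ calculus, the gauge conditions $U_\xi\perp U$, $V_\xi\perp V$ of \eqref{jd:eq:gauge_uv} and the sphere condition \eqref{jd:eq:gauge_s} translate precisely into $B^\top\tau_\xi=0$. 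Since $X$ lies on $S^{nm-1}$ we have $\|\VEC(S)\|=\|X\|_F=1$, and together with $U^\top U=V^\top V=I_r$ this shows $B^\top B=I$, so $BB^\top$ is an orthogonal projector.

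The heart of the argument is to show that $Q$ restricted to $W=\{\tau:B^\top\tau=0\}$ is an isometry onto $T_X\SM$. Expanding $Q^\top Q$ into its nine Kronecker blocks and applying the gauge conditions, every mixed contribution either vanishes or lands in the range of $B$, so that $Q^\top Q\,\tau = \tau + B\mu$ for each $\tau\in W$. Projecting with $I-BB^\top$ and using $B^\top\tau=0$, $B^\top B=I$ then gives $(I-BB^\top)Q^\top Q\,(I-BB^\top) = I-BB^\top$. Since the preceding proposition identifies $Q(W)$ with $T_X\SM$, this lets me write the orthogonal projector in closed form as
\[
\OP_{T_X\SM} = Q\,(I-BB^\top)\,Q^\top,
\]
which is the structural fact linking the global and local pictures.

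Next I would identify the remaining local objects with compressions of the global ones. The block definitions in the footnote are exactly $(A-\RQ(x)I)_{\mathrm{loc}}=Q^\top(A-\RQ(x)I)Q$, and the identities $(V\otimes I_n)\VEC(US)=(I_m\otimes U)\VEC(SV^\top)=(V\otimes U)\VEC(S)=x$ show that each block of $g$ equals the corresponding block of $Q^\top Ax$, i.e. $g=Q^\top Ax$. Rewriting \eqref{jd:eq:newton_our} with the commutation relation $\OP_{T_X\SM}=(I-xx^\top)\OP_{T_X\M_r}=\OP_{T_X\M_r}(I-xx^\top)$ as $\OP_{T_X\SM}(A-\RQ(x)I)\OP_{T_X\SM}\xi=-\OP_{T_X\SM}Ax$, substituting $\xi=Q\tau_\xi$, and using $\OP_{T_X\SM}=Q(I-BB^\top)Q^\top$, both sides acquire a common left factor $Q$:
\[
Q(I-BB^\top)(A-\RQ(x)I)_{\mathrm{loc}}\tau_\xi = -\,Q(I-BB^\top)g .
\]
Because the inner factors lie in $W$ and $Q$ is injective on $W$, I can cancel $Q$; finally $(I-BB^\top)\tau_\xi=\tau_\xi$ lets me insert the symmetrizing projector $(I-BB^\top)$ on the right of $(A-\RQ(x)I)_{\mathrm{loc}}$, yielding exactly \eqref{jd:eq:lrj}.

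I expect the main obstacle to be the second step: verifying that the gauge conditions annihilate precisely the off-diagonal Kronecker blocks of $Q^\top Q$ modulo $\mathrm{range}(B)$, and hence that $\OP_{T_X\SM}=Q(I-BB^\top)Q^\top$ with the clean cancellation of $Q$. This is where the orthonormality of $B$ (and thus the normalization $\|S\|_F=1$) is indispensable and where the $\VEC$/Kronecker index bookkeeping is most error-prone; the subsequent identifications of $(A-\RQ(x)I)_{\mathrm{loc}}$ and $g$ are then routine applications of the same calculus.
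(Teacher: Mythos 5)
Your proposal is correct, and it reaches \eqref{jd:eq:lrj} by a genuinely different route than the paper. The paper splits $\OP_{T_X \M_r}$ into three mutually annihilating orthogonal projectors $\OP_1+\OP_2+\OP_3$, writes \eqref{jd:eq:newton_our} as a $3\times 3$ block system, explicitly computes $\OP_i(I-xx^\top)$ and $\OP_i\xi$, and then strips off the full-column-rank left factors such as $V\otimes(I_n-UU^\top)$ row by row. You instead package the parametrization into a single matrix $Q=[\,V\otimes I_n,\ I_m\otimes U,\ V\otimes U\,]$, prove the structural identity $\OP_{T_X\SM}=Q(I-BB^\top)Q^\top$, and cancel $Q$ globally on $W=\ker B^\top$ using that $Q|_W$ is an isometry. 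All the steps you flag as delicate do go through: $B^\top B=I$ holds because $\|S\|_F=\|X\|_F=1$ on the sphere; the off-diagonal blocks of $Q^\top Q$ applied to $\tau\in W$ contribute only $(I_r\otimes U)\VEC(S_\xi)$ and $(V\otimes I_r)\VEC(S_\xi)$, which lie in $\mathrm{range}(B)$, so $(I-BB^\top)Q^\top Q(I-BB^\top)=I-BB^\top$ as claimed; and $g=Q^\top Ax$, $(A-\RQ(x)I)_{\mathrm{loc}}=Q^\top(A-\RQ(x)I)Q$ are exactly the footnote's definitions. Your approach buys a cleaner conceptual statement (the local system is literally the Galerkin compression of the global one by $Q$, with $I-BB^\top$ enforcing the gauge), at the cost of the $Q^\top Q$ bookkeeping; the paper's approach is more pedestrian but produces the three block rows directly and makes the cancellation of the left factors transparent. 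One simplification worth noting: the identity $\OP_{T_X\SM}=Q(I-BB^\top)Q^\top$ can also be verified by a one-line direct computation, since $QQ^\top=VV^\top\otimes I_n+I_m\otimes UU^\top+VV^\top\otimes UU^\top$ and $QBB^\top Q^\top=2\,VV^\top\otimes UU^\top+xx^\top$, whose difference is exactly $\OP_{T_X\M_r}-xx^\top$ as in \eqref{jd:eq:oproj}; this would let you bypass the most error-prone part of your argument.
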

\begin{proof}
Notice that $\OP_{T_X \M_r}$ is a sum of three orthogonal projections 
\[
\begin{split}
\OP_{T_X \M_r} &= \OP_1 + \OP_2 + \OP_3, \\
\OP_1 =  VV^\top \otimes (I_n - UU^\top),\quad   \OP_2 &= (I_m - VV^\top) \otimes UU^\top, \quad \OP_3 = VV^\top \otimes UU^\top
\end{split}
\]
Since $\OP_i \OP_j = \mathrm{O}$,  $i\not=j$ and $\OP_i^2 = \OP_i$ we obtain
\begin{equation}\label{jd:eq:jacobi_prelim}
	\begin{bmatrix}  \OP_1 \\ \OP_2 \\ \OP_3 \end{bmatrix}
	(I - x x^\top) (A - \RQ (x) I) (I - x x^\top)
\begin{bmatrix}  \OP_1 & \OP_2 & \OP_3 \end{bmatrix} 
\begin{bmatrix}  \OP_1\xi \\ \OP_2\xi \\ \OP_3\xi \end{bmatrix} = 
\begin{bmatrix}  \OP_1 \\ \OP_2 \\ \OP_3 \end{bmatrix}(I - x x^\top) Ax.
\end{equation}
It is easy to verify that 
\[
\begin{split}
	\OP_1 (I - xx^\top) = &\ \OP_1 = (V\otimes I_n) (V^\top \otimes (I_n - UU^\top)), \\
	  \OP_2 (I - xx^\top) = &\  \OP_2 = (I_m \otimes U) ((I_m - VV^\top) \otimes U^\top), \\
	 \OP_3  (I - xx^\top) = &\ (VV^\top \otimes UU^\top) (I - (V\otimes U) \VEC(S) \left(\VEC(S)\right)^\top (V^\top \otimes U^\top)) =\\
	 & (V\otimes U) (I_{r^2} -  \VEC(S) \left(\VEC(S)\right)^\top ) (V^\top \otimes U^\top).
\end{split}
\]
Then from \eqref{jd:eq:parametrization}
\[
\begin{split}
	&\OP_1 \xi =  V \otimes (I_n - UU^\top)\, \VEC (U_\xi), \\
	&\OP_2 \xi = (I_m - VV^\top) \otimes U\, \VEC (V_\xi^\top), \\
	&\OP_3 \xi =  V \otimes U\, \VEC (S_\xi), \\
\end{split}
\]
Thus, the first block row in \eqref{jd:eq:jacobi_prelim} can be written as
\[
\begin{split}
V\otimes (I_n - UU^\top)( 
&\underbrace{(V^\top \otimes I)(A - \RQ (x) I)(V \otimes I_n)}_{(A - \RQ (x) I)_{v,v}}(I_r \otimes (I_n - UU^\top)) \VEC(U_\xi) + \\
&\underbrace{(V^\top \otimes I)(A - \RQ (x) I)(I_m \otimes U)}_{(A - \RQ (x) I)_{v,u}}((I_m - VV^\top) \otimes I_r)\VEC(V^\top_\xi)+ \\
&\underbrace{(V^\top \otimes I)(A - \RQ (x) I)(V \otimes U))}_{(A - \RQ (x) I)_{v,uv}}(I_{r^2} -  \VEC(S) \left(\VEC(S)\right)^\top ) \VEC(S_\xi) = \\
V\otimes (I_n - UU^\top)& \underbrace{(V^\top \otimes I) A (V \otimes I_n)}_{A_{v,v}} \VEC (US).
\end{split}
\]
Since $V$ has full column rank we obtain exactly the first block row in \eqref{jd:eq:lrj}.
Other block rows can be obtained in a similar way.
\end{proof}

\subsection{Retraction}
Similarly to \eqref{jd:eq:retr_sphere} after we obtained the solution $\xi$ from  \eqref{jd:eq:lrj} we need to map the vector $x+\xi$ from the tangent space back to the manifold. 
	The following proposition gives an explicit representation for the retraction on $\SM$.
\begin{prop}
	Let $R_r$ be a retraction from the tangent bundle $T \M_r$ onto $\M_r$, then 
	\begin{equation}\label{eq:jd:retr}
		R(X, \dot{X}) = \frac{R_r(X, \dot{X})}{\|R_r(X, \dot{X})\|},
	\end{equation}
	is a retraction onto $\SM$.
\end{prop}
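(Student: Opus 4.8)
The plan is to verify that the map $R$ defined by \eqref{eq:jd:retr} satisfies the two defining properties of a retraction onto $\SM$ --- the centering condition $R(X,0)=X$ and the local rigidity condition $\mathrm{D}\,R(X,\cdot)(0)=\mathrm{id}_{T_X\SM}$ --- together with smoothness and the requirement that $R$ genuinely maps into $\SM$. Throughout I would restrict the domain of $R_r$ to the subspace $T_X\SM\subseteq T_X\M_r$, which is legitimate by the second claim of the first proposition.

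First I would check that $R$ is well defined and takes values in $\SM$. Since $R_r$ is a retraction onto $\M_r$, for $\dot X\in T_X\SM$ in a neighborhood of $0$ the matrix $R_r(X,\dot X)$ has rank exactly $r$ and is therefore nonzero, so the normalization in \eqref{eq:jd:retr} is meaningful and smooth (the map $Y\mapsto Y/\|Y\|$ is smooth away from the origin). Dividing by the positive scalar $\|R_r(X,\dot X)\|$ does not change the rank, so $R(X,\dot X)\in\M_r$; by construction $\|R(X,\dot X)\|=1$, so $R(X,\dot X)\in S^{nm-1}$. Hence $R(X,\dot X)\in\SM$, and smoothness of $R$ follows from smoothness of $R_r$ composed with the normalization.

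For the centering condition, since $R_r(X,0)=X$ and $X\in\SM\subseteq S^{nm-1}$ satisfies $\|X\|=1$, we obtain $R(X,0)=X/\|X\|=X$. The main step is local rigidity. Writing $f(\dot X)=R_r(X,\dot X)$ and letting $\langle\cdot,\cdot\rangle$ denote the Frobenius inner product (equivalently the Euclidean product on vectorizations), I would differentiate the normalization at $\dot X=0$ via the standard formula
$$
\mathrm{D}\,R(X,\cdot)(0)[\eta] = \frac{\mathrm{D} f(0)[\eta]}{\|f(0)\|} - \frac{f(0)\,\langle f(0),\,\mathrm{D} f(0)[\eta]\rangle}{\|f(0)\|^{3}}, \qquad \eta\in T_X\SM.
$$
Because $R_r$ is a retraction on $\M_r$ we have $f(0)=X$ with $\|X\|=1$ and $\mathrm{D} f(0)[\eta]=\eta$ for every $\eta\in T_X\M_r$, in particular for $\eta\in T_X\SM$. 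Substituting yields $\mathrm{D}\,R(X,\cdot)(0)[\eta]=\eta - X\,\langle X,\eta\rangle$.

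The observation that closes the argument is that every $\eta\in T_X\SM$ is also tangent to the sphere, i.e.\ $\eta\in T_X S^{nm-1}$, so that $\langle X,\eta\rangle=\VEC(X)^\top\eta=0$ by \eqref{jd:eq:proj}. Hence the correction term vanishes and $\mathrm{D}\,R(X,\cdot)(0)[\eta]=\eta$, establishing local rigidity. The only delicate point --- and the main obstacle --- is keeping track of the two inclusions $T_X\SM\subseteq T_X\M_r$ and $T_X\SM\subseteq T_X S^{nm-1}$ simultaneously: the first guarantees that $\mathrm{D} f(0)$ acts as the identity on admissible directions, while the second forces the correction term to vanish. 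Both follow directly from the parametrization of $T_X\SM$ given in the first proposition, so no further computation is needed.
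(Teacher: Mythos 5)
Your proof is correct and follows essentially the same route as the paper: verify smoothness and centering, then differentiate the normalization and use the fact that tangent vectors to $\SM$ are orthogonal to $X$ (being tangent to the sphere) so the correction term from the quotient rule vanishes. The only addition is your explicit check that $R$ actually maps into $\SM$, which the paper leaves implicit but which does not change the argument.
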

\begin{proof}
	To verify that $R$ is a retraction we need to check the following properties \cite{ao-retract-2014}
	\begin{enumerate}
		\item Smoothness on a neighborhood of the zero element in $T\SM$;
		\item $R(X,0) = X$ for all $X\in \SM$;
		\item $\left.\frac{d}{dt}R(X, t\dot{X})\right|_{t=0} = \dot{X}$ for all $X\in \SM$ and $\dot{X}\in T_X\SM$.
	\end{enumerate}
	The first property follows from the smoothness of $R_r$. 
	The second property holds since $R_r(X,0) = X$ and  $\|X\|=1$ for $X\in \SM$.
	Let us verify the third property:
	\begin{equation}\label{jd:eq:retr_proof}
	\begin{split}
		&\left.\frac{d}{dt}R(X, t\dot{X})\right|_{t=0} = 
		\left.\frac{d}{dt}\left(\frac{R_r(X, t\dot{X})}{\|R_r(X, t\dot{X})\|}\right)\right|_{t=0} = \\
		&\frac{\left.\frac{d}{dt}R_r(X, t\dot{X})\right|_{t=0}\left.\|R_r(X, t\dot{X})\right|_{t=0}\| - \frac{d}{dt}\left.\|R_r(X, t\dot{X})\|\right|_{t=0} \left.R_r(X, t\dot{X})\right|_{t=0}}{\|\left.R_r(X, t\dot{X})\right|_{t=0}\|^2}.
	\end{split}
	\end{equation}
	Since $(X, \dot{X}) = 0$ for $X\in \SM$, we get
	\[
	\begin{split}
		\left.\frac{d}{dt}\|R_r(X, t\dot{X})\|\right|_{t=0} & = \\ &\frac{\left.\left(\frac{d}{dt} R_r(X, t\dot{X}), R_r(X, t\dot{X})\right)\right|_{t=0} + \left.\left(R_r(X, t\dot{X}), \frac{d}{dt} R_r(X, t\dot{X})\right)\right|_{t=0}}{2\left.\|R_r(X, t\dot{X})\right|_{t=0}\|} \\
		&= \frac{(\dot{X}, X) + ({X}, \dot{X})}{2\|X\|} = 0, \quad X \in \SM, \quad \dot{X} \in T_X \SM.
	\end{split}
	\]
	Substituting the latter expression into \eqref{jd:eq:retr_proof} and accounting for 
	$$\left.\|R_r(X, t\dot{X})\right|_{t=0}\|= \|R_r(X, 0)\| =\|X\|=1$$ we obtain $\left.\frac{d}{dt}R(X, t\dot{X})\right|_{t=0} = \dot{X}$ which completes the proof.
\end{proof}

\begin{remark}
	Retraction \eqref{eq:jd:retr} is a composition of two retractions: first on the low-rank manifold $\M_r$ and then on the sphere $S^{n-1}$.
	Note that the composition in the reversed order is not a retraction as it does not map to the manifold $\SM$.
\end{remark}

	A standard choice of retraction on $\M_r$ is \cite{ao-retract-2014}
	$$
		R_r(x,\xi) \equiv R_r(x+\xi) = \OP_{\M_r} (x + \xi), 
	$$
	where
	$$
		\OP_{\M_r} (x+\xi) \equiv \argmin_{y\in \M_r} \|y - (x+\xi)\|.
	$$
For small enough correction $\xi$ retraction can be calculated using the SVD procedure~\cite{ao-retract-2014} as follows.
First,
\[
\begin{split}
x + \xi = \text{vec}(USV^\top + U_\xi V^\top + U V^\top_\xi + U S_\xi V^\top) = \\
\VEC \left( 
\begin{bmatrix}
	U & U_\xi
\end{bmatrix}
\begin{bmatrix}
	S + S_\xi & I \\ 
	I & O
\end{bmatrix}
\begin{bmatrix}
	V & V_\xi
\end{bmatrix}^\top
 \right)
\end{split}.
\]
Then we calculate $QR$ decompositions 
$$
	Q_U R_U = 
	\begin{bmatrix}
		U & U_\xi
	\end{bmatrix}, \quad
	Q_V R_V = 
	\begin{bmatrix}
		V & V_\xi
	\end{bmatrix}.
$$
and the truncated SVD with truncation rank $r$ of
$$
	R_U
	\begin{bmatrix}
	S + S_\xi & I \\ 
	I & O
\end{bmatrix} R_V^\top,
$$
with $r$ leading singular vectors $U_r\in \F^{2r\times r}$, $V_r\in \F^{2r\times r}$ and the matrix of leading $r$ singular values $S_r\in \F^{r\times r}$.
Thus, the resulting retraction can be written as
$$
	R_r(x + \xi) = (Q_U U_r) S_r (Q_V V_r)^\top.
$$
and from \eqref{eq:jd:retr} the retraction has the form
\begin{equation}\label{jd:eq:retr_svd}
	R(x,\xi) \equiv R(x + \xi) = (Q_U U_r) \frac{S_r}{\|S_r\|}(Q_V V_r)^\top.
\end{equation}

\subsection{Properties of the local system}\label{jd:sec:locsystprop}
Let us mention several important properties of the matrix $(A- \RQ(x) I)_\text{loc}$.
Assume that we are looking for the smallest eigenvalue $\lambda_1$ and $\RQ(x)$ is closer to $\lambda_1$ than to the next eigenvalue $\lambda_2$, i.e. the matrix $(A- \RQ(x) I)$ is nonnegative definite.

First, the matrix $(A- \RQ(x) I)_\text{loc}$ is singular. Indeed, a nonzero vector 
$$
\begin{bmatrix}
	\VEC(U) \\ -\VEC(V^\top) \\ 0
\end{bmatrix}
$$ 
is in the nullspace of $(A- \RQ(x) I)_\text{loc}$.
This is the result of nonuniqueness of the representation of a tangent vector without gauge conditions.
However,  $(A- \RQ(x) I)_\text{loc}$ is positive definite on the subspace 
$$ 
B^\top\tau_z=0,\quad \tau_z = 
\begin{bmatrix}
	\VEC(U_z) \\ -\VEC(V_z^\top) \\ \VEC(S_z)
\end{bmatrix}
$$ 
where $B$ is defined in \eqref{jd:eq:lrj}. 
Indeed, 
\begin{equation}\label{jd:eq:pos}
\begin{split}
	&\min_{\substack{B^\top\tau_z = 0, \\ \tau_z\not=0}} \left(\tau_z,(A- \RQ(x) I)_\text{loc}\, \tau_z \right) = \\
	&\min_{\substack{B^\top\tau_z = 0, \\ \tau_z\not=0}} 
		(\VEC(U_z V^\top + U V_z^\top + U  S_z V^\top), 
		(A- \RQ(x) I)  \VEC(U_z V^\top + U V_z^\top + U  S_z V^\top))  =\\
	&\min_{\substack{z \in T_x\SM, \\ z\not=0}} \left(z,(A- \RQ(x) I)\, z\right) \geq 
	 \min_{\substack{z \perp x, \\ z\not=0}} \left(z,(A- \RQ(x) I)\, z\right) \geq \lambda_1 + \lambda_2 - 2\RQ(x). 
\end{split}
\end{equation}
The latter inequality follows from \cite[Lemma~3.1]{notay-jdcg-2002}. 
Hence, if $\RQ(x)$ is closer to $\lambda_1$ than to $\lambda_2$, the matrix is positive definite.

Let us show that the condition number of 
$$
	(I - BB^\top)(A- \RQ(x) I)_\text{loc}(I - BB^\top)
$$
does not deteriorate as $\RQ(x)$ converges to the exact eigenvalue.
The condition number is given as
$$
	\kappa = \frac{\max_{\substack{\tau_z: B^\top\tau_z = 0, \\ \tau_z\not=0}}q(\tau_z)}{\min_{\substack{\tau_z: B^\top\tau_z = 0, \\ \tau_z\not=0}} q(\tau_z)}, \quad 
	q(\tau_z) = \frac{(\tau_z, (A- \RQ(x) I)_\text{loc} \tau_z)}{(\tau_z, \tau_z)}.
$$
Similarly to \eqref{jd:eq:pos} one can show that
$$
	\kappa \leq \frac{\max_{\substack{z: z\perp x, \\ z\not=0}} q(z)}{\min_{\substack{z: z\perp x, \\ z\not=0}} q(z)}.
$$
This expression is a bound for the original Jacobi correction equation and according to \cite{notay-jdcg-2002} its condition number does not grow as $\RQ(x)$ approaches the exact eigenvalue~$\lambda_1$.

\section{Subspace acceleration} \label{jd:sec:sub_acc}

Since the considered Newton method is inexact or linear systems are solved approximately, we can additionally do the line search
\begin{equation}\label{jd:eq:xnew}
x_\text{new} = R(x + \alpha_\mathrm{opt} \xi),
\end{equation}
where 
$$
	\alpha_\mathrm{opt} = \argmin_{\alpha} \RQ(R(x + \alpha \xi)),
$$
which can be found from the Armijo backtracking rule \cite{absil-manopt-2009} or simply approximated without retraction as
\begin{equation}\label{jd:eq:ls_qopt}
	\alpha_\mathrm{opt} \approx \argmin_{\alpha} \RQ(x + \alpha \xi),
\end{equation}
which can be solved exactly.

To accelerate the convergence one can utilize vectors obtained on previous iterations in the Jacobi-Davidson manner.
However, to avoid instability and reduce the computational cost we use the \emph{vector transport} \cite{absil-manopt-2009}.
At each iteration we project the basis obtained from previous iterations on the tangent space of the current approximation to the solution.
Let us consider this approach in more details.

After $k$ iterations we have the basis $\mathcal{V}_{b-1} = [v_1,\dots,v_{b-1}]$, $b\leq k$ and project it on~$T_{X_k}\M_r$:
$$
\mathcal{\widetilde V}_{b-1} = [\OP_{T_{X_k}\M_r} v_1,\dots,\OP_{T_{X_k}\M_r}v_{b-1}].
$$
If needed we can carry out additional orthogonalization of $\mathcal{\widetilde V}_{b-1}$ vectors.
Note that orthogonalization onto the tangent space is an inexpensive operation since linear combinations of any number of vectors from the tangent space can be at most of rank~$2r$.
Given the solution $\xi_k$ of \eqref{jd:eq:newton_our} next step is to expand $\mathcal{\widetilde V}_{b-1}$ with $v_{b}$ obtained from the orthogonalization of $\xi_k$ with respect to $\mathcal{\widetilde V}_{b-1}$:
\begin{equation}\label{jd:eq:rr_noopt}
\mathcal{V}_{b} = [\mathcal{\widetilde V}_{b-1}, v_b]
\end{equation}
A new approximation to $x$ is calculated using the Rayleigh-Ritz procedure.
Namely, we calculate $\mathcal{V}_b^\top A \mathcal{V}_b$ 
and then find the eigenpair $(\theta, c)$: 
\begin{equation}\label{jd:eq:rr}
	\mathcal{V}_b^\top A \mathcal{V}_b \, c = \theta c,
\end{equation}
corresponding to the desired eigenvalue.
Finally, the Ritz vector $c$ gives us a new approximation to $x$: 
$$x_{k+1} =  \mathcal{V}_b\, c.$$
We emphasize that the columns of $\mathcal{V}_b$ are from $T_{X_k}\M_r$, therefore there is no problem with the rank growth.
If one wants to maintain fixed rank $r$ it is required to optimize the coefficients $c$:
$$x_{k+1} =  R(\mathcal{V}_b c_\mathrm{opt}), \quad c_\mathrm{opt} =  \argmin_{c_1,\dots,c_b} \RQ(R(\mathcal{V}_b c)).$$
Optimization can be done, e.g. by using the line search over each of $c_i$ sequentially, starting from the initial guess found from \eqref{jd:eq:rr}.
However, to reduce complexity one can optimize only over the coefficient in front of $v_b$, or simply use $c$ instead of $c_\mathrm{opt}$.


\section{Connection with Rayleigh quotient iteration}\label{jd:sec:ii}
If the linear system in \eqref{jd:eq:jac_cor} is solved exactly, JD method without the subspace acceleration is known \cite{svh-jd-2000} to be equivalent to the Rayleigh quotient iteration:
\begin{equation}\label{jd:eq:invit_orig}
\begin{split}
	&(A - \RQ(x_k)I)\tilde x = x_k, \\
	&x_{k+1} = \frac{\tilde x}{\|\tilde x\|}.
\end{split}
\end{equation}
Let us find how the method will look like when we solve \eqref{jd:eq:newton_our} exactly.
On the $k$-th iteration equation \eqref{jd:eq:newton_our} looks as
\[
\begin{split}
(I - x_k x_k^\top) \OP_{T_{X_k} \M_r} (A - \RQ (x_k) I)\OP_{T_{X_k} \M_r}\xi_k &=  -\OP_{T_{X_k} \M_r} (I - x_k x_k^\top)Ax_k, \\
\OP_{T_{X_k} \M_r} \xi_k  = \xi_k,\quad  x_k^\top\xi_k&=0.
\end{split}
\]
Therefore,
\[
\begin{split}
\OP_{T_{X_k} \M_r} (A - \RQ (x_k) I)\OP_{T_{X_k} \M_r}&\xi_k - \alpha x_k = -\OP_{T_{X_k} \M_r} (A - \RQ (x_k) I) x_k,
\end{split}
\]
where
$$
\alpha = x_k^\top\, \left[\OP_{T_{X_k} \M_r} (A - \RQ (x_k) I)\OP_{T_{X_k} \M_r}\right]\xi_k. 
$$
Denoting $\tilde x = x_k + \xi_k$, we obtain
\begin{equation}\label{jd:eq:invit}
\begin{split}
 &\left[\OP_{T_{X_k} \M_r} (A - \RQ (x_k) I)\OP_{T_{X_k} \M_r}\right] \tilde x =  x_k, \quad \OP_{T_{X_k} \M_r} \tilde x = \tilde x, \\
 &x_{k+1} = R(\tilde x).
\end{split}
\end{equation}
where the parameter $\alpha$ was omitted thanks to $R(\alpha\tilde x) = R(\tilde x)$.
Thus, \eqref{jd:eq:invit} represents the extension of the Rayleigh quotient (RQ) iteration \eqref{jd:eq:invit_orig} to the low-rank case and can be interpreted as a Gauss-Newton method.

One can expect that the JD method converges faster than the RQ iteration \eqref{jd:eq:invit} when systems are solved inexactly.
As we have shown in Sec.~\ref{jd:sec:locsystprop} the condition number of local systems in the proposed JD method does not deteriorate when $\RQ(x_k)$ approaches the exact eigenvalue. 
This property positively influences the convergence, as was investigated for the original JD \cite{notay-jdinner-2005}.
We will illustrate it in the numerical experiments in Sec.~\ref{jd:sec:num}.

\section{Complexity}\label{jd:sec:complexity}
Let us discuss how to solve the Jacobi correction equation numerically for the matrix $A$ given as
\begin{equation}\label{jd:eq:operator}
	A = \sum_{\alpha=1}^R F_\alpha \otimes G_\alpha,
\end{equation}
where matrices $F_\alpha$ and $G_\alpha$ are of sizes $n\times n$ and $m\times m$ correspondingly.
In complexity estimates we additionally assume that $F_\alpha$ and $G_\alpha$ can be multiplied by a vector using $\mathcal{O}(n)$ and $\mathcal{O}(m)$ operations respectively, e.g. they are sparse.
As an example, $A$ can be the Laplacian-type operator with low-rank potential.

Even if the initial operator $A$ is sparse, the projected local system $A_\text{loc}$ is usually dense.
Fortunately, a fast matrix-vector multiplication by $A_\text{loc}$ can be done.
Let us consider the multiplication by the first block row of $A_\text{loc}$:
\begin{equation}\label{jd:eq:matvec}
\begin{split}
u &= A_{v,v} \text{vec}(U) + A_{v,u} \text{vec}(V^\top) + A_{vu,vu} \text{vec}(S)  \\
&=(V_k^\top  \otimes I_n) A (\text{vec} (UV_k^\top + U_k V^\top + U_k S V_k^\top)  \\
&=(V_k^\top  \otimes I_n) A (\text{vec} (UV_k^\top + U_k (V^\top +S V_k^\top)),
\end{split}
\end{equation}
where we took into account that the vector from the tangent space $UV_k^\top + U_k V^\top + U_k S V_k$ is of rank $2r$ instead of $3r$ as in the case when summing $3$ arbitrary rank-$r$ matrices.
This slightly decreases the cost of matrix-vector multiplication.
Finally substituting \eqref{jd:eq:operator} into \eqref{jd:eq:matvec}
\[
\begin{split}
	u = 	(V_k^\top  \otimes I_n)\left( \sum_{\alpha=1}^R F_\alpha \otimes G_\alpha \right) \left( ( V_k \otimes I_n)\text{vec}(U) + (I_n\otimes U_k) \text{vec} (V^\top +S V_k^\top) \right) = \\
 \left( \sum_{\alpha=1}^R (V_k^\top  F_\alpha  V_k) \otimes G_\alpha \right)\text{vec}(U) + 
 \left( \sum_{\alpha=1}^R (V_k^\top  F_\alpha) \otimes (G_\alpha U_k) \right)\text{vec}(V^\top +S V_k^\top).
\end{split}
\]
Calculation of an $r\times r$ matrix $V_k^\top  F_\alpha  V_k$ requires $\mathcal{O}(nr^2 + n r)$ operations.
Multiplication of $V_k^\top  F_\alpha  V_k \otimes G_\alpha $ by a vector costs $\mathcal{O}(mr^2  + m r)$. 
Calculation of $V_k^\top  F_\alpha$ and $G_\alpha U_k$ costs $\mathcal{O}(n^2r)$ and $\mathcal{O}(m^2r)$ respectively.
As a result, matrix-vector multiplication costs $\mathcal{O}((n+m)r^2)$ operations.
Given fast matrix-vector multiplication we can solve \eqref{jd:eq:lrj} by the appropriate Krylov type iterative method.
In the next section we discuss how to construct a preconditioner for this system.

In subspace acceleration we project vectors of $V_b$ \eqref{jd:eq:rr_noopt} onto the tangent space.
Projection of each vector costs $\mathcal{O}((m+n)r^2)$.
Thus, assuming that $r\ll n,m$ the complexity of the whole algorithm is $\mathcal{O}((n+m) r(R+r))$.


\section{Block Jacobi preconditioning of the local system}\label{jd:sec:prec}
In the work \cite{svh-jd-2000} the preconditioner of the type $$M_d = ( I - x x^\top) M ( I - x x^\top)$$ was proposed, where $M$ is an approximation to $A - \RQ(x) I$.
If a system with $M$ can be easily solved, then to solve
$$
	M_d\, y = z,
$$
one can use the explicit formula
\begin{equation}\label{jd:eq:4prec}
	y = - \lambda M^{-1} x - M^{-1} z, \quad \lambda = - \frac{x^\top M^{-1} z}{x^\top M^{-1} x}.
\end{equation}
Following this concept we consider a preconditioner of a type
\begin{equation}\label{jd:eq:prec}
	M_d = (I - B B^\top) M_\text{loc} (I - B B^\top) ,
\end{equation}
where $M_\text{loc}$ is an approximation to $(A - \RQ(x) I)_\text{loc}$.
Even if  $M$ is easily inverted, this might not be the case for the projected matrix $M_\text{loc}$.
Hence, we use a block Jacobi preconditioner 
\begin{equation}\label{jd:eq:bdiag}
\begin{split}
	M_d = (I - B B^\top) 
	\begin{bmatrix}
	A_{v,v} - \RQ (x) I  & 0 \\
	0& A_{u,u} - \RQ (x) I  & 0\\
	0 & 0& A_{vu,vu} - \RQ (x) I 
\end{bmatrix}
(I - B B^\top) 
=\\
\begin{bmatrix}
P^\perp_{U} (A_{v,v} - \RQ (x) I) P^\perp_{U} & 0 \\
	0& P^\perp_{V}(A_{u,u}- \RQ (x) I)P^\perp_{V} & 0\\
	0 & 0& P^\perp_{S}(A_{vu,vu}- \RQ (x) I)P^\perp_{S}
\end{bmatrix},\\
\end{split}
\end{equation}
where the projection matrices $P^\perp_{U}$, $P^\perp_{V}$ and $P^\perp_{S}$ are defined as
\[
\begin{split}
&P^\perp_{U} = I_r \otimes (I_n - U U^\top), \\ 
&P^\perp_{V}=(I_n -  V V^\top ) \otimes I_r,  \\ 
&P^\perp_{S} =  I_{r^2}-\text{vec} (S)\left(\VEC(S)\right)^\top.
\end{split}
\]
Let us note that the system with the matrix $A_{vu,vu}- \RQ (x) I$ can be solved directly since it is of small size $r^2 \times r^2$.
Thus, to solve 
$$
P^\perp_{S}(A_{vu,vu}- \RQ (x) I)P^\perp_{S} y = P^\perp_{S} z, \quad  y^\top \VEC(S) = 0, 
$$ 
a direct formula can be used (it follows directly from \eqref{jd:eq:4prec})
\[
	y = (A_{vu,vu}- \RQ (x) I)^{-1} P^\perp_{S}z - \lambda_S (A_{vu,vu}- \RQ (x) I)^{-1} \text{vec}(S),
\]
where
$$
	\lambda_S =  \frac{\left(\VEC(S)\right)^\top(A_{vu,vu}- \RQ (x) I)^{-1} P^\perp_{S}z}{\left(\VEC(S)\right)^\top(A_{vu,vu}- \RQ (x) I)^{-1} \text{vec}(S)}.
$$
Let us derive formulas for solving
$$
P^\perp_{U} (A_{v,v} - \RQ (x) I) P^\perp_{U} y = z, \quad P^\perp_{U} y = y
$$
or equivalently
$$
(I_r \otimes (I_n - U U^\top))\, (A_{v,v} - \RQ (x) I)\, (I_r \otimes (I_n - U U^\top))\, y = z, \quad (I_r \otimes  U^\top) y = 0,
$$
then
$$
(A_{v,v} - \RQ (x) I) y - (I_r \otimes U) \Lambda = z,
$$
where the matrix $\Lambda$ is chosen to satisfy $(I_r \otimes  U^\top) y = 0$.
For a suitable preconditioner $M_{vv}$ which approximates $(A_{v,v} - \RQ (x) I)$ we have
$$
	 y - M_{vv}^{-1} (I_r \otimes U) \Lambda = M_{vv}^{-1} z,
$$
Multiplying the latter equation by $(I_r \otimes U^\top)$ we obtain
$$
	\Lambda = - \left[(I_r \otimes U^\top)M_{vv}^{-1} (I_r \otimes U)\right]^{-1} M_{vv}^{-1} z,
$$
and
$$
	 y = M_{vv}^{-1} (I_r \otimes U) \Lambda + M_{vv}^{-1} z.
$$
Similarly for 
$$
P^\perp_{V} (A_{u,u} - \RQ (x) I) P^\perp_{V} y = z, \quad P^\perp_{V} y = y
$$
we obtain formulas
$$
	y = M_{uu}^{-1} (V \otimes I_r) \Lambda + M_{uu}^{-1} z, \quad \Lambda = - \left[(V^\top \otimes I_r)M_{uu}^{-1} (V \otimes I_r)\right]^{-1} M_{uu}^{-1} z.
$$
Matrices $\left[(V^\top \otimes I_r)M_{uu}^{-1} (V \otimes I_r)\right]$ and $\left[(I_r \otimes U^\top)M_{vv}^{-1} (I_r \otimes U)\right]$ are of size $r\times r$ and can be inverted explicitly.
The main difficulty is to find $M_{uu}^{-1}$ and $M_{vv}^{-1}$.
Their inversion depends on the particular application. 
For instance, if $M = I\otimes F + G \otimes I$, then the inverse can be approximated explicitly as \cite{khor-prec-2009}
\begin{equation}\label{jd:eq:prec_khor}
M^{-1} \approx \sum_{k=1}^K c_k \, e^{-t_k F} \otimes e^{-t_k G},
\end{equation}
which we use later in numerical experiments. 
Alternatively, one can use inner iterations to solve a system with diagonal blocks. 

Note that similar to the original JD method, our method is not a preconditioned eigensolver.
We use the preconditioner only to solve auxiliary linear systems. 




\section{Numerical experiments}\label{jd:sec:num}
In numerical experiments we find approximation to the smallest eigenvalue of the convection-diffusion operator
\[
\begin{split}
&\mathcal{A} u \equiv - \frac{\partial^2 u}{\partial x^2}  - \frac{\partial^2 u}{\partial y^2}  + \frac{\partial u}{\partial x} + \frac{\partial u}{\partial y} + V u, \quad (x,y)\in\Omega\\
&\left. u\right|_{\partial\Omega} = 0,
\end{split}
\]
where $\Omega = (-1/2, 1/2)^2$, and potential $V$ is chosen such that solution is of low rank: $V\equiv V(x,y) = e^{-\sqrt{x^2 + y^2}/10}$.
We use a standard second-order finite difference discretization on a $n\times n$ tensor product uniform grid to discretize second derivatives and backward difference to approximate first derivatives.
The potential $V$ on the grid is approximated by the SVD decomposition with relative accuracy $10^{-10}$ and, hence, represented as a diagonal sum-of-product operator.
The discretized operator $A$ is represented in the form \eqref{jd:eq:operator} with sparse matrices $F_\alpha$, $G_\alpha$ and $R=14$.
\paragraph{Low-rank version and original JD}
Let us compare the behaviour of the original JD method and the proposed low-rank version.
Figure~\ref{jd:fig:lr_vs_full} shows the residual plot with respect to the number of outer iterations.
We set the rank $r=5$, grid size $n=150$.
One can observe that the low-rank version stagnates near the accuracy of the best rank $5$ approximation to the exact eigenvector.

\begin{figure}
	\centering{
		\includegraphics[width=120mm]{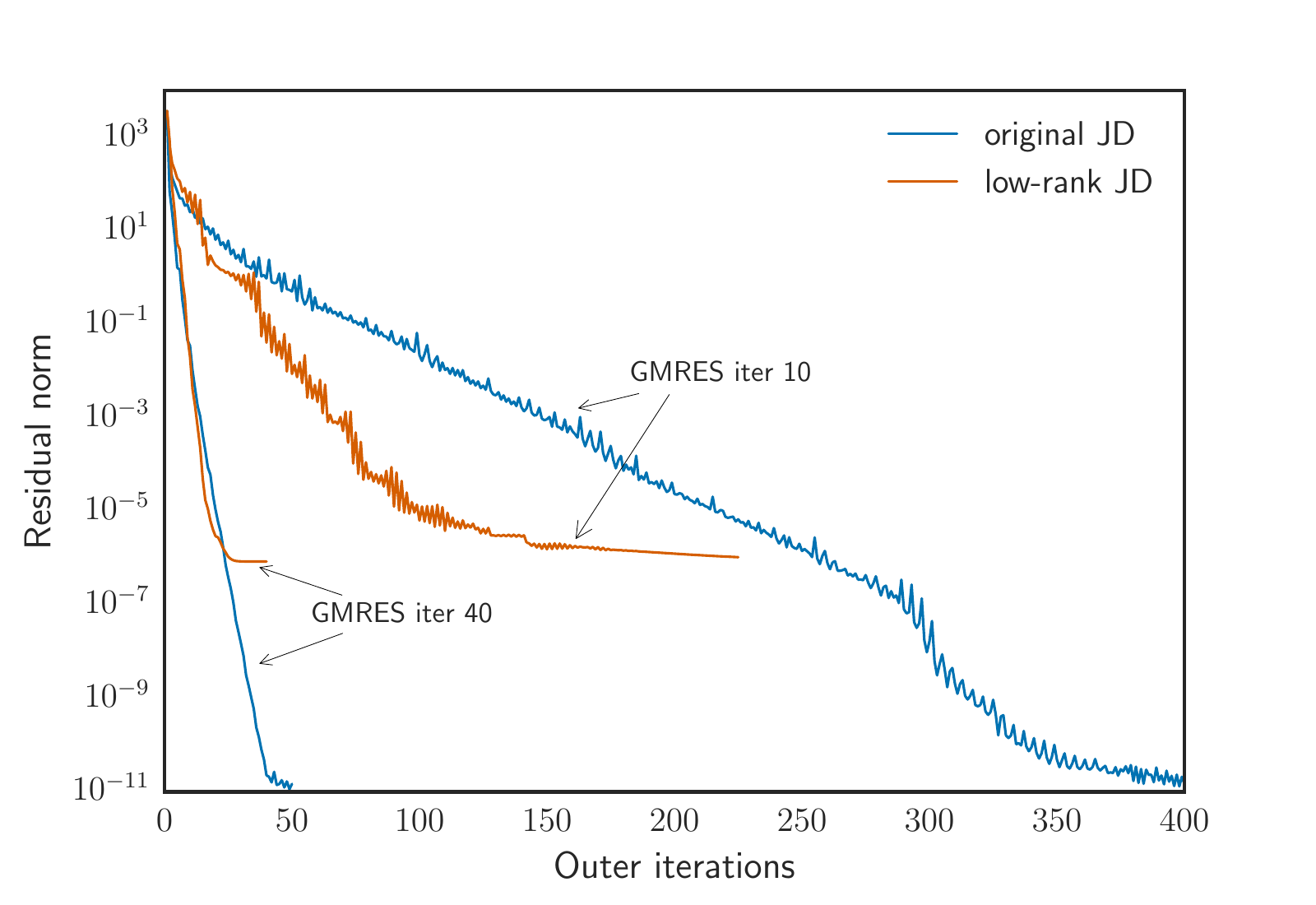}
	}
	\caption{Residual norm w.r.t. the number of outer iterations for the original and the proposed JD methods. Plots are done for different number of inner GMRES iterations to solve linear systems. Parameters: $N=150^2$, $r=5$.}
	\label{jd:fig:lr_vs_full}
\end{figure}

We note that the cost of each inner iteration is different: $\mathcal{O}(nrR)$ for the proposed version and $\mathcal{O}(n^2)$ for the original version, so the proposed version is more efficient for large $n$.
Nevertheless, Figure~\ref{jd:fig:lr_vs_full} shows that our method requires fewer number of less expensive iterations to achieve a given accuracy (before stagnation). The less accurately we solve the system, the more gain we observe.
Such speed-up may happen due to the usage of additional information about the solution, namely that it is of low rank.

\paragraph{Comparison with the low-rank Davidson approach and the Rayleigh quotient iteration}

In this experiment we compare performance of the proposed fixed-rank Jacobi-Davidson approach and the proposed Rayleigh quotient inverse iteration \eqref{jd:eq:invit}.
We also compare them with the ``Davidson'' approach when no projection  $I - x_k x_k^\top$ is done:
\begin{equation}\label{jd:eq:davidson}
\left[\OP_{T_{X_k} \M_r} (A - \RQ (x_k) I)\OP_{T_{X_k} \M_r}\right]\xi_k = - \OP_{T_{X_k} \M_r} r_k, \quad \OP_{T_{X_k} \M_r} \xi_k  = \xi_k.
\end{equation}
Figure~\ref{jd:fig:jdvsd} illustrates the results of the comparison.
As anticipated, when local systems are solved accurately the Davidson approach stagnates since the exact solution of \eqref{jd:eq:davidson} is~$-x_k$.
So, no additional information is added to the previous approximation $x_k$.
This problem does not occur if local systems are solved inexactly.
For the Rayleigh quotient iteration we observe opposite behaviour due to the deterioration of condition number of local systems.
The Jacobi-Davidson approach yields good convergence in both cases.

\begin{figure}
    \centering
    \begin{subfigure}[b]{0.48\linewidth}        
        \centering
        \includegraphics[width=\linewidth]{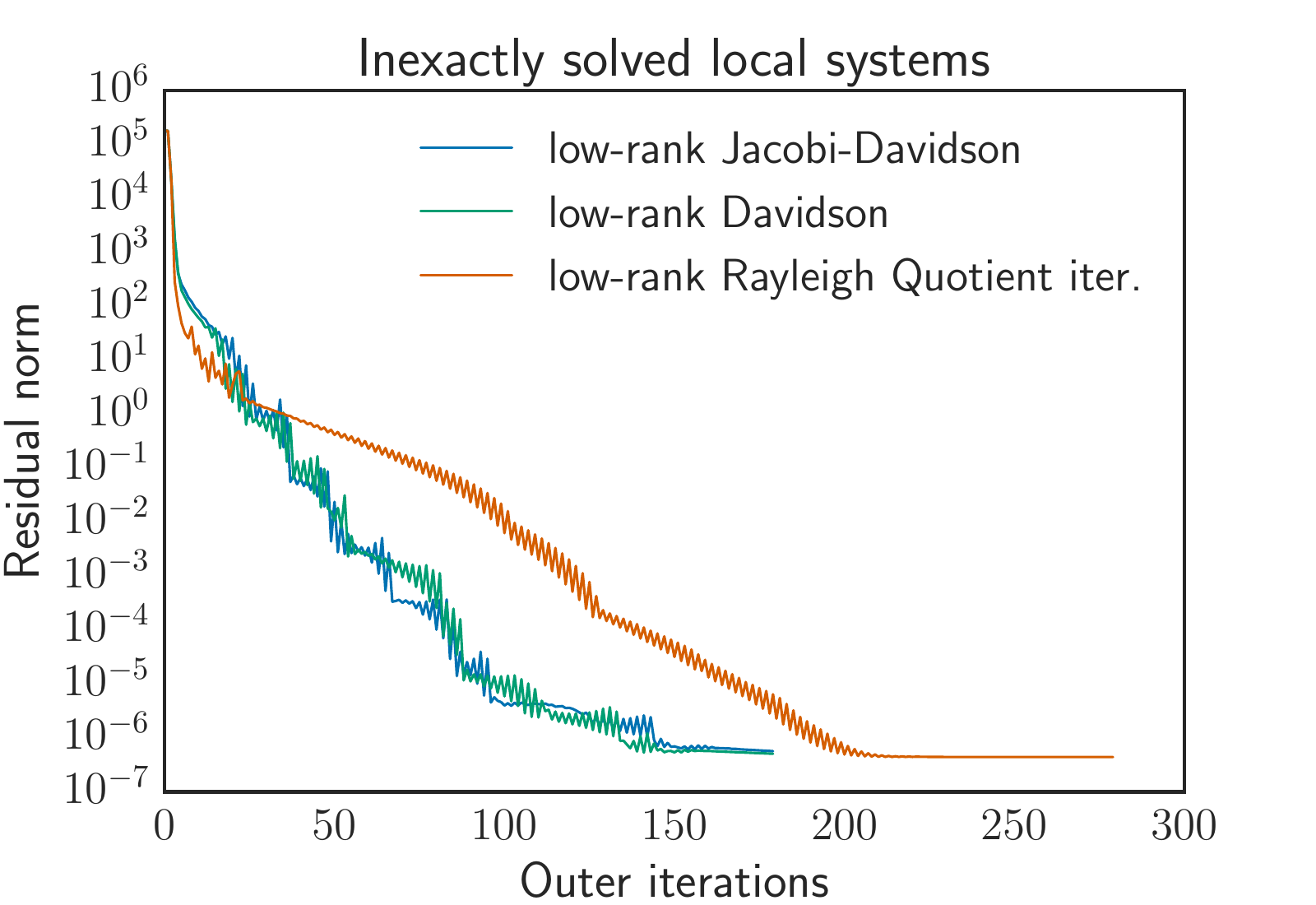}
        \caption{}
        \label{jd:fig:dA}
    \end{subfigure}
    \begin{subfigure}[b]{0.48\linewidth}        
        \centering
        \includegraphics[width=\linewidth]{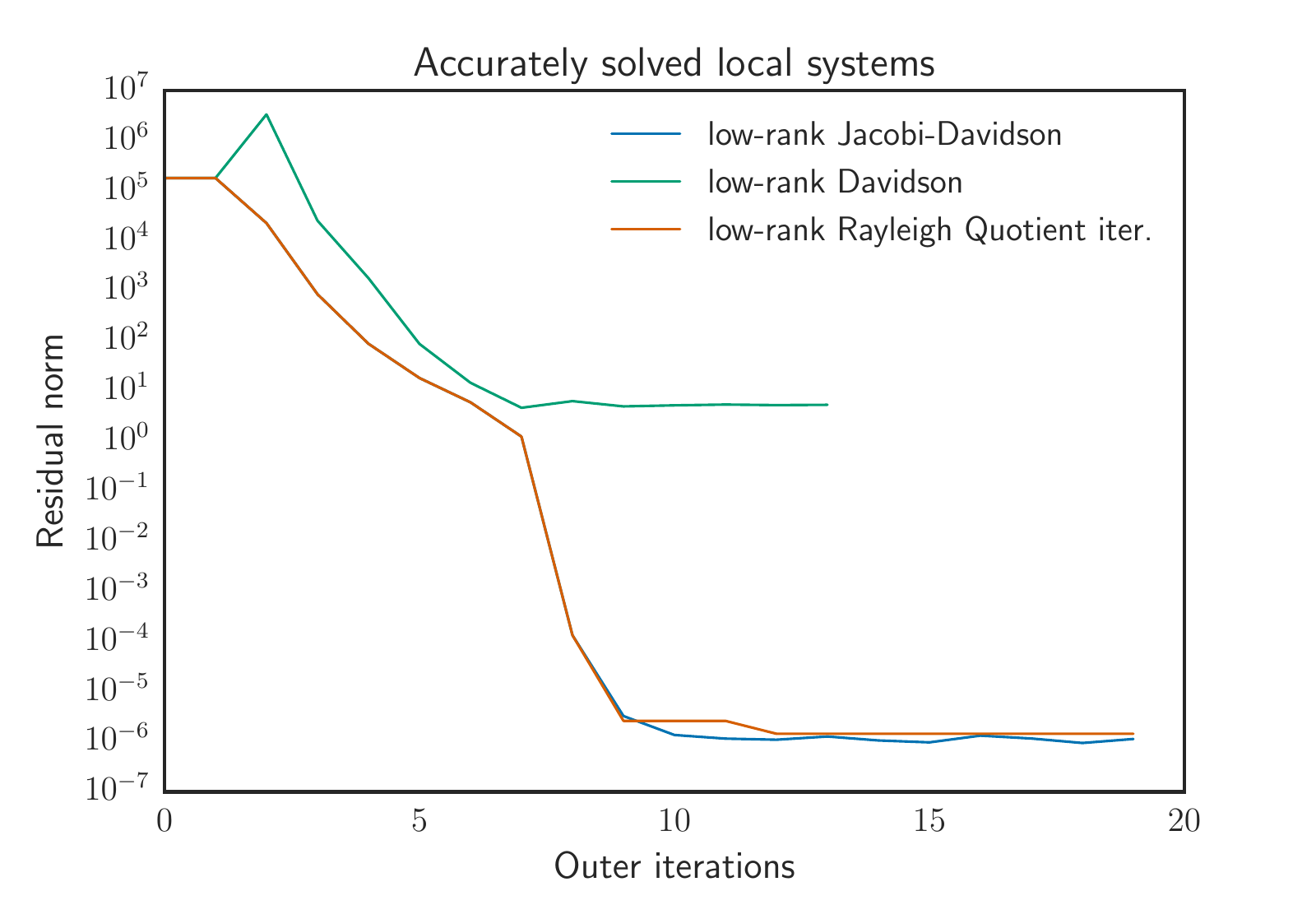}
        \caption{}
        \label{jd:fig:dB}
    \end{subfigure}
    \caption{Residual w.r.t. number of outer iterations, $N=2000^2$, $r=3$. Local systems from Figure~\ref{jd:fig:dA} were solved inexactly using $100$ GMRES iterations, while local systems from Figure~\ref{jd:fig:dB} were solved accurately with the preconditioner \eqref{jd:eq:prec_khor}, $K=20$ and $30$ GMRES iterations.}
    \label{jd:fig:jdvsd}
\end{figure}

\paragraph{Comparison with the ALS method}

Alternating linear scheme (ALS) method is the standard approach for low-rank optimization.
The idea is following: given $X = UV^\top$ we minimize Rayleigh quotient $\RQ(x)\equiv \widetilde \RQ (U, V)$ successively over $U$ and $V$.
Minimization over $U$ results in the eigenvalue problem with matrix $A_{v,v}$, while minimization over $V$ results in the eigenvalue problem with matrix $A_{u,u}$.

Note that in the proposed JD method we need to solve local systems, while in the ALS approach we solve local eigenvalue problems.
To make comparison fair we ran original JD method to solve local problems in ALS.
We choose the fixed number of iterations as choosing fixed accuracy to solve eigenvalue problems in ALS leads to stagnation of the method.
Since the inner JD solver has two types of iterations: iterations to solve local problem and outer iterations, we need to tune these parameters to get fair comparison.
We tuned them such that each ALS iteration runs approximately the same amount of time as the outer iteration of the proposed JD and gives the best possible convergence.
Results are presented on Figure~\ref{jd:fig:jdvsals}.
On both subfigures the proposed JD method yields the fastest convergence.

\begin{figure}
    \centering
    \begin{subfigure}[b]{0.48\linewidth}        
        \centering
        \includegraphics[width=\linewidth]{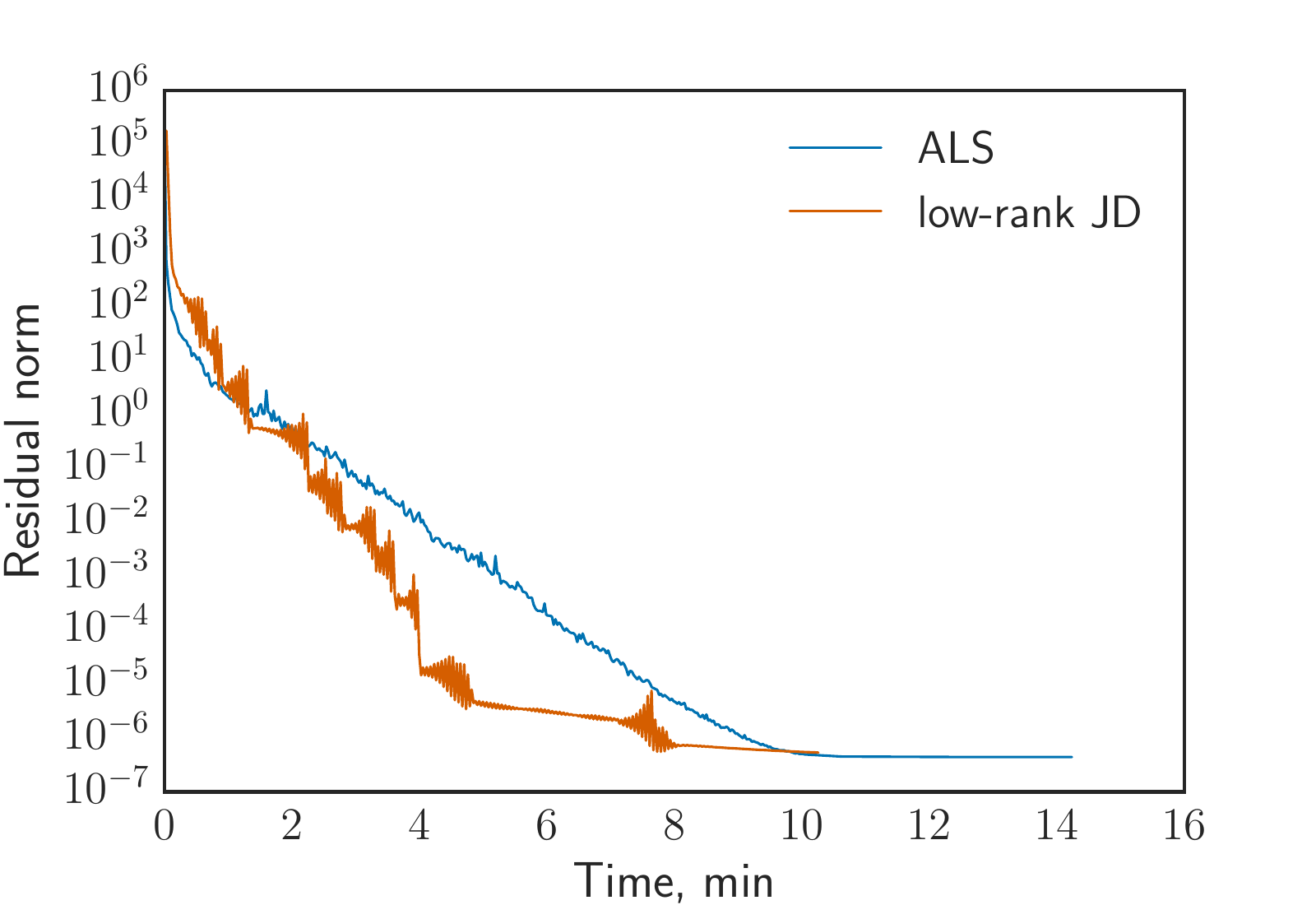}
        \caption{}
        \label{jd:fig:alsA}
    \end{subfigure}
    \begin{subfigure}[b]{0.48\linewidth}        
        \centering
        \includegraphics[width=\linewidth]{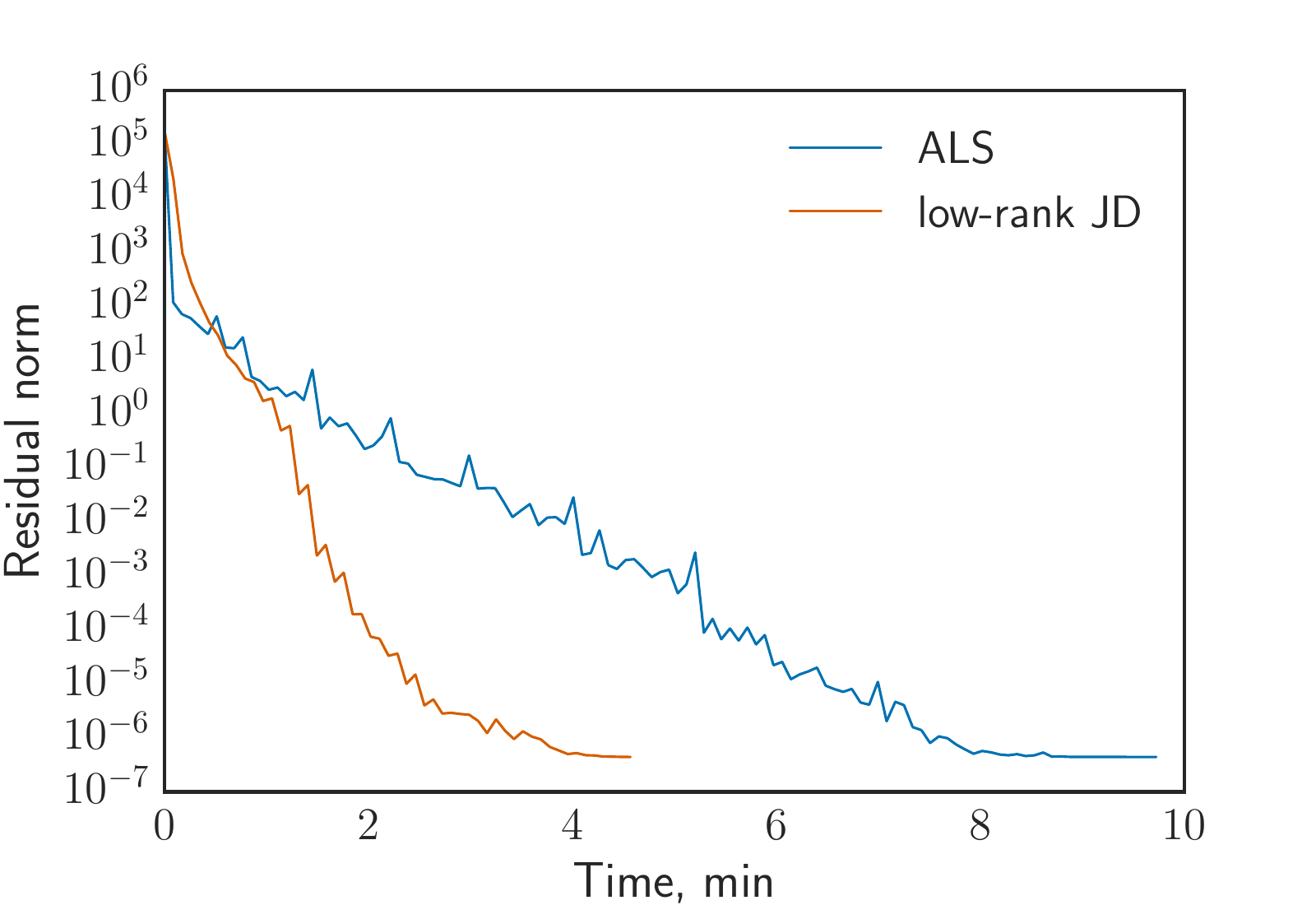}
        \caption{}
        \label{jd:fig:alsB}
    \end{subfigure}
    \caption{Residual w.r.t. time for ALS and the proposed JD methods. Figures~\ref{jd:fig:alsA} and \ref{jd:fig:alsB}  correspond to 150 and 600 GMRES iterations to solve local problem of the proposed JD. Parameters of local problems in ALS were chosen to give similar to the proposed JD time of each outer iteration, $N=2000^2$.}
    \label{jd:fig:jdvsals}
\end{figure}

\paragraph{Subspace acceleration}
In this part we investigate the behaviour of the subspace accelerated version proposed in Sec.~\ref{jd:sec:sub_acc}.
First, on Figure~\ref{jd:fig:sub_acc_full} we compare he original subspace acceleration and the version with vector transport when subspace is projected onto the tangent space of the current approximation.
No restarts are used.
As anticipated the projected version stagnates when accuracy of approximation equals error of low-rank approximation.
Apart from that, the convergence behaviour of the methods is comparable, but the projected version is more suitable for  low-rank calculations.
To illustrate this point we provide Figure~\ref{jd:fig:sub_acc_lr}, where the projected version is compared with the version with no projection.
The latter one is implemented with hard rank thresholding of linear combination \eqref{eq:jd:retr}.
No additional optimization over coefficients besides Rayleigh-Ritz procedure is done.
As we observe from the figure, the projected version outperforms the version without projection.
The point is that we exactly optimize coefficients on the tangent space since no rank thresholding in this case is required.
If vectors do not belong to the tangent space, rank rapidly grows with the subspace size and rank thresholding can introduce significant error.

\begin{figure}[t]
  \begin{multicols}{2}
    \hfill
    \includegraphics[width=60mm]{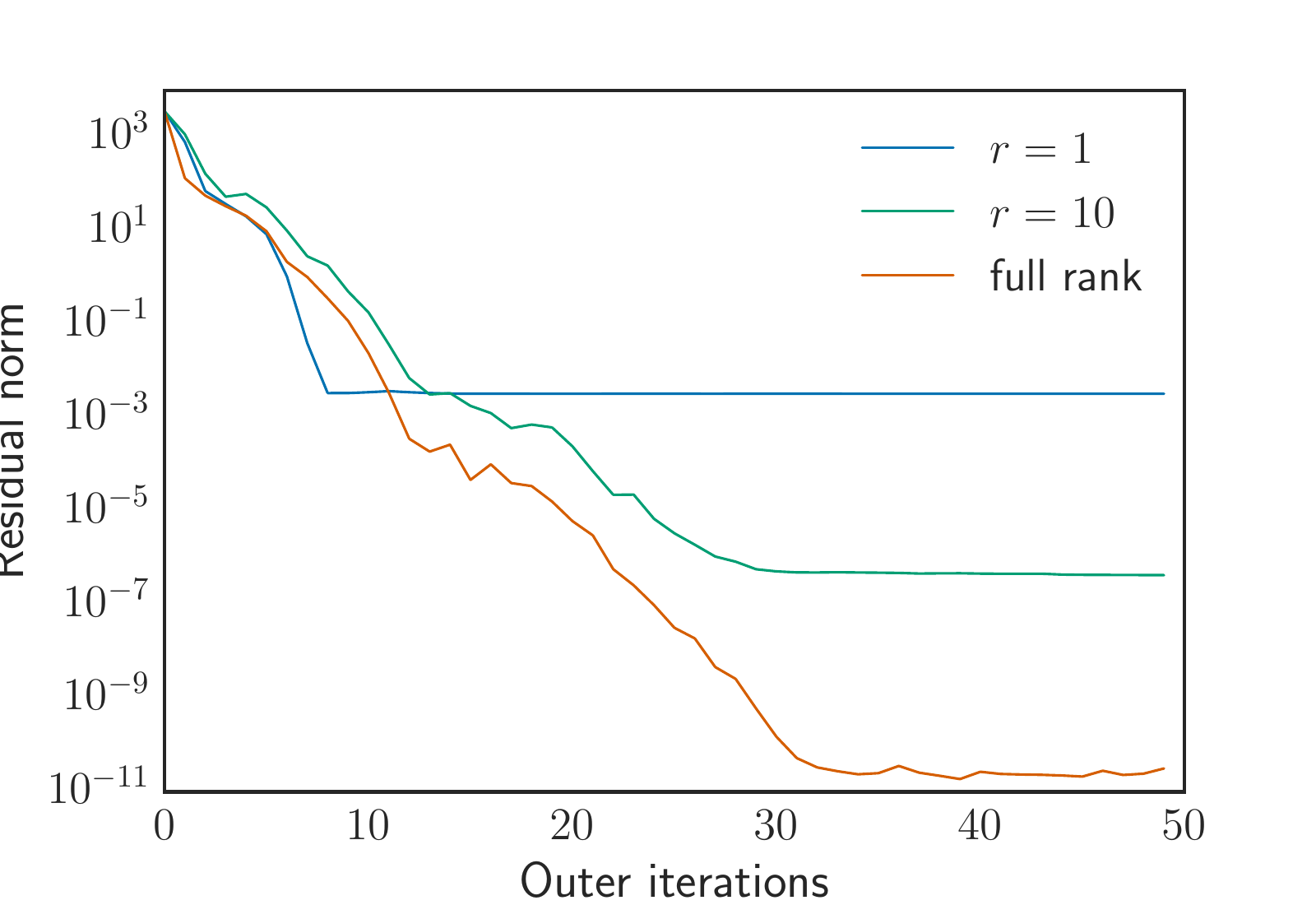}
    \hfill
    \caption{Comparison of original subspace accelerated version of JD and the version with vector transport. In both cases full version of JD with no restarts is used. $N=150^2$, local systems are solved using $150$ GMRES iterations.}
    \label{jd:fig:sub_acc_full}
    \hfill
    \includegraphics[width=60mm]{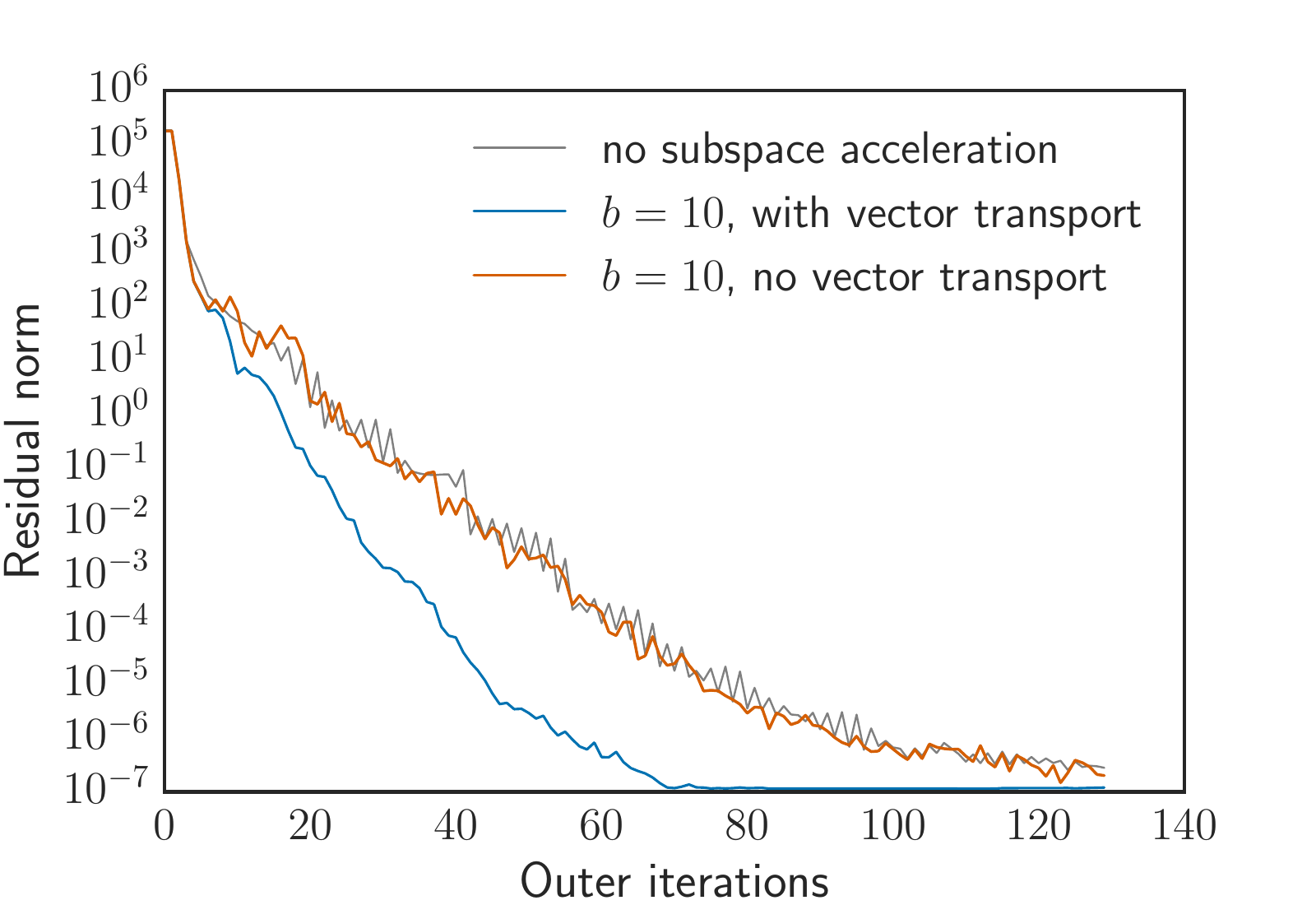}
    \hfill
    \caption{Proposed JD method with subspace acceleration for two cases: when search subspace is projected onto the tangent space (with vector transport) and when no projection is done. Parameters: $N=2000^2$, $r=5$, $150$ GMRES iterations to solve local systems.}
    \label{jd:fig:sub_acc_lr}
  \end{multicols}
\end{figure}

\section{Related work}
Eigenvalue problems with low-rank constraint are usually considered in literature in the context of more general low-rank decompositions of multidimensional arrays, e.g. the tensor train decomposition \cite{osel-tt-2011}.
Two-dimensional case naturally follows from the multidimensional generalization.

There are two standard ways to solve eigenvalue problems in low-rank format: optimization of Rayleigh quotient based on alternating minimization, which accounts for multilinear structure of the decomposition, and iterative methods with rank truncation.
The first approach has been developed for a long time in the matrix product state community \cite{schollwock-2011,white-dmrg-1992,ostlund-dmrg-1995}.
We also should mention altenating minimization algorithms that were recently proposed in the mathematical community. They are based either on the alternating linear scheme (ALS) procedure \cite{holtz-ALS-DMRG-2012,dkos-eigb-2014} or on basis enrichment using alternating minimal energy method (AMEn)~\cite{kressner-evamen-2014,ds-dmrgamen-2015}.
Rank truncated iterative methods include power method \cite{beylkin-2002,beylkin-high-2005}, inverse iteration \cite{khst-eigen-2012}, locally optimal block preconditioned conjugate gradient method \cite{tobler-htdmrg-2011,lebedeva-tensorcg-2010,lebedeva-tensornd-2011}.
For more information about eigensolvers in low-rank formats see \cite{larskres-survey-2013}.
To our knowledge no generalization of the Jacobi-Davidson method was considered.

In \cite{ksv-manprec-2016} authors consider inexact Riemannian Newton method for solving linear systems with a low-rank solution. They also omit the curvature part in the Hessian and utilize specific structure of the operator to construct a preconditioner.

In \cite{ro-mp-2016} authors proposed a version of inverse iteration based on the alternating linear scheme ALS procedure, which is similar to \eqref{jd:eq:invit_orig}.
By contrast, the present work considers inverse iteration on the whole tangent space.
We also provide an interpretation of the method as an inexact Newton method.

We note that the proposed approach is considered on the fixed rank manifolds.
Recently desingularization technique was applied to non-smooth variety of bounded-rank matrices $\M_{\leq r}$ \cite{ko-desing-2016pre}.

\section{Conclusions and future work}

The natural next step is to consider generalization to the multidimensional case.
Most of the results can be directly generalized to the tensor train decomposition, e.g.   \eqref{jd:eq:newton_our}, \eqref{eq:jd:retr} and \eqref{jd:eq:invit}.
However, to avoid cumbersome formulas and present the method in the most comprehensible way we restricted the paper to the treatment of the two-dimensional case. Moreover, the correct choice of parametrization of the tangent space and efficient practical implementation worth individual consideration.
We plan to address them in a separate work and test the method on real-world applications.

\section*{Acknowledgements}
The authors would like to thank Valentin Khrulkov for helpful discussions, and Marina Munkhoeva for the careful reading of an early draft of the paper.




\bibliography{../bibtex/our,../bibtex/dmrg,../bibtex/tensor,../bibtex/iter,jdref}{}
\bibliographystyle{plain}

\end{document}